\DeclareMathOperator{\sgn}{sgn}
\DeclareMathOperator{\ord}{ord}
\DeclareMathOperator{\lcm}{lcm}
\newcommand{\Z}{\mathbb{Z}}
\newtheorem*{lemma}{Lemma}
\begin{document}
\title{Cracking the problem with 33}
\author{Andrew R. Booker}
\thanks{This work was carried out using the computational
facilities of the Advanced Computing Research Centre, University of
Bristol, \url{http://www.bris.ac.uk/acrc/}.
The author was partially supported by EPSRC Grant \texttt{EP/K034383/1}.
No data were created in the course of this study.}

\address{School of Mathematics, University of Bristol,
University Walk, Bristol, BS8 1TW, United Kingdom}
\email{andrew.booker@bristol.ac.uk}
\begin{abstract}
Inspired by the Numberphile video ``The uncracked problem with 33'' by
Tim Browning and Brady Haran \cite{Numberphile}, we investigate solutions to
$x^3+y^3+z^3=k$ for a few small values of $k$.
We find the first known solution for $k=33$.
\end{abstract}
\maketitle
\section{Introduction}
Let $k$ be a positive integer with $k\not\equiv\pm4\pmod{9}$. Then Heath-Brown
\cite{Heath-Brown} has conjectured that there are infinitely many triples
$(x,y,z)\in\Z^3$ such that
\begin{equation}\label{eq:main}
k=x^3+y^3+z^3.
\end{equation}
Various numerical investigations of \eqref{eq:main} have been carried out,
beginning as early as 1954 \cite{MW}; see \cite{BPTY} for a thorough
account of the history of these investigations up to 2000.
The computations performed since that time have been dominated
by an algorithm due to Elkies \cite{Elkies}. The latest that we are aware of
is the paper of Huisman \cite{Huisman} (based on the implementation by
Elsenhans and Jahnel \cite{EJ}),
which determined all solutions to \eqref{eq:main} with $k<1000$
and $\max\{|x|,|y|,|z|\}\le 10^{15}$. In particular, Huisman reports
that solutions are known for all but 13 values of $k<1000$:
\begin{equation}\label{eq:exceptions}
33,\;42,\;114,\;165,\;390,\;579,\;627,\;633,\;732,\;795,\;906,\;921,\;975.
\end{equation}

Elkies' algorithm works by finding rational points near the Fermat
curve $X^3+Y^3=1$ using lattice basis reduction; it is well suited to
finding solutions for many values of $k$ simultaneously. In this paper we
describe a different approach that is more efficient when $k$ is fixed.
It has the advantage of provably finding all solutions with a bound on
the \emph{smallest} coordinate, rather than the largest as in Elkies'
algorithm. This always yields a nontrivial expansion of the search range
since, apart from finitely many exceptions that can be accounted for separately,
one has
$$
\max\{|x|,|y|,|z|\}>\sqrt[3]{2}\min\{|x|,|y|,|z|\}.
$$
Moreover, empirically it is often the case that one of the variables
is much smaller than the other two, so we expect the gain to be even
greater in practice.

Our strategy is similar to some earlier approaches (see especially
\cite{HLT}, \cite{Bremner}, \cite{KTS} and \cite{BPTY}), and is based
on the observation that in any solution, $k-z^3=x^3+y^3$ has $x+y$ as a
factor. Our main contribution over the earlier investigations is to note
that with some time-space tradeoffs, the running time is very nearly
linear in the height bound, and it is quite practical when implemented
on modern 64-bit computers.

In more detail, suppose that $(x,y,z)$ is
a solution to \eqref{eq:main}, and assume without loss of generality
that $|x|\ge|y|\ge|z|$. Then we have
$$
k-z^3=x^3+y^3=(x+y)(x^2-xy+y^2).
$$
If $k-z^3=0$ then $y=-x$, and every value of $x$ yields a
solution. Otherwise,
setting $d=|x+y|=|x|+y\sgn{x}$, we see that $d$ divides $|k-z^3|$, and
\begin{align*}
\frac{|k-z^3|}{d}&=x^2-xy+y^2=x(2x-(x+y))+y^2\\
&=|x|(2|x|-d)+(d-|x|)^2=3x^2-3d|x|+d^2,
\end{align*}
so that
\begin{equation}\label{eq:solution}
\{x,y\}=\left\{\frac12\sgn(k-z^3)\left(
d\pm\sqrt{\frac{4|k-z^3|-d^3}{3d}}\right)\right\}.
\end{equation}
Thus, given a candidate value for $z$, there is an effective procedure
to find all corresponding values of $x$ and $y$, by running through all
divisors of $|k-z^3|$. Already this basic algorithm finds all solutions
with $\min\{|x|,|y|,|z|\}\le B$ in time $O(B^{1+\varepsilon})$, assuming
standard heuristics for the time complexity of integer factorization.
In the next section we explain how to avoid factoring and
achieve the same ends more efficiently.

\subsection*{Acknowledgements}
I thank Roger Heath-Brown for helpful comments and suggestions.

\section{Methodology}
For ease of presentation, we will
assume that $k\equiv\pm3\pmod{9}$; note that this
holds for all $k$ in \eqref{eq:exceptions}.
Since the basic algorithm described above is reasonable for finding
small solutions, we will assume henceforth that $|z|>\sqrt{k}$.
Also, if we specialize \eqref{eq:main} to solutions with $y=z$,
then we get the Thue equation $x^3+2y^3=k$, which is efficiently solvable.
Using the Thue solver in PARI/GP \cite{pari}, we verify that
no such solutions exist for the $k$ in \eqref{eq:exceptions}.
Hence we may further assume that $y\ne z$.

Since $|z|>\sqrt{k}\ge\sqrt[3]{k}$, we have
$$\sgn{z}=-\sgn(k-z^3)=-\sgn(x^3+y^3)=-\sgn{x}.$$
Likewise, since $x^3+z^3=k-y^3$ and $|y|\ge|z|$,
we have $\sgn{y}=-\sgn{x}=\sgn{z}$.
Multiplying both sides of \eqref{eq:main} by
$-\sgn{z}$, we thus obtain
\begin{equation}\label{eq:XY}
|x|^3-|y|^3-|z|^3=-k\sgn{z}.
\end{equation}
Set $\alpha=\sqrt[3]{2}-1$, and recall that $d=|x+y|=|x|-|y|$.
If $d\ge\alpha|z|$ then
\begin{align*}
-k\sgn{z}&=|x|^3-|y|^3-|z|^3
\ge(|y|+\alpha|z|)^3-|y|^3-|z|^3\\
&=3\alpha(\alpha+2)(|y|-|z|)z^2+3\alpha(|y|-|z|)^2|z|\\
&\ge3\alpha(\alpha+2)|y-z|z^2.
\end{align*}
Since $3\alpha(\alpha+2)>1$, this is incompatible with our assumptions
that $y\ne z$ and $|z|>\sqrt{k}$. Thus we must have $0<d<\alpha|z|$.

Next, reducing \eqref{eq:XY} modulo $3$ and recalling our assumption
that $k\equiv\pm3\pmod{9}$, we see that
$$
d=|x|-|y|\equiv|z|\pmod{3}.
$$
Let $\epsilon\in\{\pm1\}$ be so that $k\equiv3\epsilon\pmod{9}$. Then,
since every cube is congruent to $0$ or $\pm1\pmod{9}$, we
must have $x\equiv y\equiv z\equiv\epsilon\pmod{3}$, so that
$\sgn{z}=\epsilon\left(\frac{|z|}{3}\right)=\epsilon\left(\frac{d}{3}\right)$.
In view of \eqref{eq:solution}, we get a solution to \eqref{eq:main}
if and only if $d\mid z^3-k$ and
$3d(4|z^3-k|-d^3)=3d(4\epsilon\left(\frac{d}{3}\right)(z^3-k)-d^3)$ is a square.

In summary, to find all solutions to \eqref{eq:main} with
$|x|\ge|y|\ge|z|>\sqrt{k}$, $y\ne z$ and $|z|\le B$, it suffices to solve the
following system for each $d\in\Z\cap(0,\alpha B)$ coprime to $3$:
\begin{equation}\label{eq:system}
\begin{aligned}
&\frac{d}{\sqrt[3]{2}-1}<|z|\le B,
\quad\sgn{z}=\epsilon\left(\frac{d}{3}\right),
\quad z^3\equiv k\pmod{d},\\
&3d\left(4\epsilon\!\left(\frac{d}{3}\right)(z^3-k)-d^3\right)=\square.
\end{aligned}
\end{equation}

Our approach to solving this is straightforward: we work through the
values of $d$ recursively by their prime factorizations, and apply the
Chinese remainder theorem to reduce the solution of $z^3\equiv k\pmod{d}$
to the case of prime power modulus, to which standard algorithms apply.
Let $r_d(k)=\#\{z\pmod{d}:z^3\equiv k\pmod{d}\}$ denote the number of cube
roots of $k$ modulo $d$.
By standard analytic estimates, since $k$ is not a cube, we have
$$
\sum_{d\le\alpha B}r_d(k)\ll_k B.
$$
Heuristically, computing the solutions of $z^3\equiv k\pmod{p}$ for
all primes $p\le\alpha B$ can be done with $O(B)$ arithmetic operations on
integers in $[0,\alpha B]$; see e.g.\ the algorithm described in \cite[\S2.9,
Exercise 8]{NZM}. Assuming this, one can see that with Montgomery's
batch inversion trick \cite[\S10.3.1]{Montgomery2}, the remaining effort
to determine the roots of $z^3\equiv k\pmod{d}$ for all positive integers
$d\le\alpha B$ can again be carried out with $O(B)$ arithmetic operations.

Thus, we can work out all $z$ satisfying the first line of
\eqref{eq:system}, as a union of arithmetic progressions, in linear time.
To detect solutions to the final line, it is crucial to have
a quick method of determining whether
$\Delta:=3d\left(4\epsilon\!\left(\frac{d}{3}\right)(z^3-k)-d^3\right)$
is a square. We first note that for fixed $d$ this condition reduces to
finding an integral point on an elliptic curve; specifically,
writing $X=12d|z|$ and $Y=(6d)^2|x-y|$, from \eqref{eq:solution}
we see that $(X,Y)$ lies on the Mordell curve
\begin{equation}\label{eq:mordell}
Y^2=X^3-2(6d)^3\left(d^3+4\epsilon\left(\frac{d}{3}\right)k\right).
\end{equation}
Thus, for fixed $d$ there are at most finitely many solutions, and they
can be effectively bounded. For some small values of $d$ it is practical
to find all the integral points on \eqref{eq:mordell} and check whether
any yield solutions to \eqref{eq:main}. For instance, using the integral
point functionality in Magma \cite[\S128.2.8]{magma}, we verified that
there are no solutions for $k$ as in \eqref{eq:exceptions} and
$d\le40$, except possibly for $(k,d)\in\{(579,29),(579,34),(975,22)\}$.

Next we note that some congruence and divisibility constraints come for free:
\begin{lemma}
Let $z$ be a solution to \eqref{eq:system}, let $p$ be a prime
number, and set $s=\ord_p{d}$, $t=\ord_p(z^3-k)$. Then:
\begin{enumerate}
\item[(i)]$z\equiv\frac43k(2-d^2)+9(k+d)\pmod{18}$;
\item[(ii)]if $p\equiv2\pmod{3}$ then $t\le 3s$;
\item[(iii)]if $t\le 3s$ then $s\equiv t\pmod{2}$;
\item[(iv)]if $\ord_p{k}\in\{1,2\}$ then $s\in\{0,\ord_p{k}\}$.
\end{enumerate}
\end{lemma}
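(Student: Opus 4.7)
The plan is to extract local ($p$-adic) information from the square condition $\Delta := 3d\left(4\epsilon\left(\tfrac{d}{3}\right)(z^3-k) - d^3\right) = \square$ in \eqref{eq:system}, treating each prime $p$ separately. Throughout, set $s = \ord_p d$ and $t = \ord_p(z^3 - k)$, and recall that $\sgn z = \epsilon\left(\tfrac{d}{3}\right)$ from the earlier discussion.

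For (i), I would combine a mod-$2$ and a mod-$9$ congruence via the CRT. Mod $2$, Fermat's little theorem gives $k \equiv x + y + z \pmod 2$, and $x + y = -\sgn z \cdot d \equiv d \pmod 2$, so $z \equiv k + d \pmod 2$. Mod $9$, write $x = \epsilon + 3a$, $y = \epsilon + 3b$, $z = \epsilon + 3c$ and use $(\epsilon + 3h)^3 \equiv \epsilon + 9h \pmod{27}$: summing gives $a + b + c \equiv (k - 3\epsilon)/9 \pmod 3$, while $x + y = -\sgn z \cdot d$ gives $3(a + b) = -\sgn z \cdot d - 2\epsilon$. Subtracting pins down $c \pmod 3$ and hence $z \pmod 9$; using $\sgn z \cdot d \equiv \epsilon(2 - d^2) \pmod 9$ (a consequence of $\sgn z \cdot d^3 \equiv \epsilon \pmod 9$, which follows from $d^3 \equiv \left(\tfrac{d}{3}\right) \pmod 9$), this matches $\tfrac{4k}{3}(2 - d^2) \pmod 9$.

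For (ii), I would argue by contradiction: if $p \equiv 2 \pmod 3$ and $t > 3s$, then $\ord_p(\Delta) = 4s$ and $\Delta/p^{4s} \equiv -3(d/p^s)^4 \pmod p$, so $-3$ must be a quadratic residue mod $p$, contradicting $\left(\tfrac{-3}{p}\right) = 1 \iff p \equiv 1 \pmod 3$; the case $p = 2$ reduces to $\Delta/2^{4s} \equiv 5 \pmod 8$, also not a quadratic residue. For (iii), parity of $\ord_p(\Delta)$ suffices for $p \notin \{2, 3\}$ (directly when $t < 3s$, automatically when $t = 3s$); for $p = 3$ one has $s = 0$ from $\gcd(d, 3) = 1$ and $t = 0$ from $z^3 - k \equiv -2\epsilon \pmod 9$; and for $p = 2$, parity handles all subcases except $t = 3s - 1$, where $\ord_2(\Delta) = 4s$ is even but a mod-$8$ computation gives $\Delta/2^{4s} \equiv 6 \sgn z \cdot DU - 3 \in \{3, 7\} \pmod 8$ for appropriate odd $D, U$, not a quadratic residue.

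Part (iv) then follows from (iii) together with a direct divisibility observation: if $s \ge 1$, then $p \mid d \mid z^3 - k$ forces $p \mid z$, so $\ord_p(z^3) \ge 3 > \ord_p(k)$ yields $t = \ord_p(k)$ whenever $\ord_p(k) \in \{1, 2\}$; combined with $s \le t$ and the parity constraint $s \equiv t \pmod 2$ from (iii), this pins $s = \ord_p(k)$ in both cases. The main technical hurdle is the mod-$8$ subcase of (iii) for $p = 2$, $t = 3s - 1$, where parity of $\ord_2(\Delta)$ alone does not suffice and one must examine $\Delta$ modulo $8$ directly; everything else reduces to routine modular arithmetic.
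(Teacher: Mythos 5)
Your proof is correct, and for parts (ii), (iii) and (iv) it coincides with the paper's argument essentially step for step: the same normalization $\Delta=3\bigl(4p^{s+t}uv-p^{4s}u^4\bigr)$ with $u,v$ the unit parts of $d$ and $z^3-k$, the same use of $-3$ being a non-residue (mod $p$ for odd $p\equiv2\pmod 3$, mod $8$ for $p=2$) in (ii), the same case split on $3s$ versus $t+2\ord_p 2$ with a mod-$8$ analysis of the exceptional case $p=2$, $3s=t+1$ in (iii), and the same deduction of (iv) from $s\le t=\ord_p k<3\le 3s$ plus the parity constraint. The one place you genuinely diverge is the mod-$9$ half of (i). The paper stays inside the square condition of \eqref{eq:system}: it computes $\Delta/(3d)\equiv 3|z|-4\epsilon dk\pmod{27}$, observes that this vanishes mod $9$ so that $\ord_3\Delta\ge3$ is odd, and concludes that squareness forces vanishing mod $27$, which gives $z\pmod 9$. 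You instead return to the integer solution $(x,y,z)$ of \eqref{eq:main}, expand the cubes mod $27$ via $x\equiv y\equiv z\equiv\epsilon\pmod 3$, and eliminate $a+b$ using $x+y=-\sgn(z)\,d$; your identity $\delta d\equiv 2-d^2\pmod 9$ then recovers the stated formula. The two derivations are equivalent, since the square condition on $\Delta$ is exactly the integrality of $x,y$ in \eqref{eq:solution}; yours is arguably more transparent, while the paper's reads the constraint off $\Delta$ alone (though both proofs reduce \eqref{eq:main} mod $2$ for the parity part, so neither is purely a statement about \eqref{eq:system}). One phrasing quibble in (iii): for $p=2$ the subcase $t=3s-2$ is not settled by the parity of $\ord_2\Delta$ (the two terms of $2^{-4s}\Delta$ then have equal $2$-adic valuation and may cancel further); like $t=3s$ it is settled because $3s\equiv s$ and $t+2\equiv t\pmod 2$ make the conclusion automatic. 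Your conclusion there is of course correct.
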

\begin{proof}
Let $\Delta=3d\left(4\epsilon\!\left(\frac{d}{3}\right)(z^3-k)-d^3\right)$.
Writing $\delta=\left(\frac{d}{3}\right)$, we have
$|z|\equiv d\equiv\delta\pmod{3}$. Observing that
$(\delta+3n)^3\equiv \delta+9n\pmod{27}$, modulo $27$ we have
\begin{align*}
\frac{\Delta}{3d}&=
4\epsilon\delta(z^3-k)-d^3=4|z|^3-d^3-4\epsilon\delta k\\
&\equiv4[\delta+3(|z|-\delta)]-[\delta+3(d-\delta)]-4\epsilon\delta k
=3(4|z|-d)-\delta[18+4(\epsilon k-3)]\\
&\equiv3(4|z|-d)-d[18+4(\epsilon k-3)]
=12|z|-9d-4\epsilon dk\\
&\equiv 3|z|-4\epsilon dk.
\end{align*}
This vanishes modulo $9$, so in order for $\Delta$ to be a square, it
must vanish mod $27$ as well. Hence
$$
z=\epsilon\delta|z|\equiv\frac{4\delta dk}{3}
\equiv\frac{4(2-d^2)k}{3}\pmod{9}.
$$
Reducing \eqref{eq:main} modulo $2$ we see that $z\equiv k+d\pmod{2}$,
and this yields (i).

Next set $u=p^{-s}d$ and $v=p^{-t}\epsilon\delta(z^3-k)$, so that
$$
\Delta=3\bigl(4p^{s+t}uv-p^{4s}u^4\bigr).
$$
If $3s<t$ then $p^{-4s}\Delta\equiv-3u^4\pmod{4p}$,
but this is impossible when $p\equiv2\pmod{3}$, since $-3$ is not a
square modulo $4p$. Hence we must have $t\le 3s$ in that case.

Next suppose that $t\le 3s$. We consider the following cases, which
cover all possibilities:
\begin{itemize}
\item If $p=3$ then $s=t=0$, so $s\equiv t\pmod{2}$.
\item If $p\ne3$ and $3s>t+2\ord_p{2}$ then
$\ord_p\Delta=s+t+2\ord_p2$, so $s\equiv t\pmod{2}$.
\item If $3s\in\{t,t+2\}$ then $s\equiv t\pmod{2}$.
\item If $p=2$ and $3s=t+1$ then
$2^{-4s}\Delta=3\bigl(2uv-u^4\bigr)\equiv3\pmod{4}$,
which is impossible.
\end{itemize}
Thus, in any case we conclude that $s\equiv t\pmod{2}$.

Finally, suppose that $p\mid k$ and $p^3\nmid k$.
If $s=0$ then there is nothing to prove, so assume otherwise.
Since $d\mid{z^3-k}$, we must have $p\mid z$, whence
$$0<s\le t=\ord_p(z^3-k)=\ord_p{k}<3s.$$
By part (iii) it follows that $s\equiv\ord_p{k}\pmod{2}$, and thus
$s=\ord_p{k}$.
\end{proof}
Thus, once the residue class of $z\pmod{d}$ is fixed, its residue modulo
$\lcm(d,18)$ is determined. Note also that conditions (ii) and (iii)
are efficient to test for $p=2$.

However, even with these optimizations there are $\gg B\log{B}$
pairs $d,z$ satisfying the first line of \eqref{eq:system} and
conclusions (i) and (iv) of the lemma.
To achieve better than $O(B\log{B})$
running time therefore requires eliminating some values of $z$ from the start.
We accomplish this with a standard time-space tradeoff.
To be precise, set $P=3(\log\log{B})(\log\log\log{B})$, and let
$M=\prod_{5\le p\le P}p$
be the product of primes in the interval $[5,P]$.
By the prime number theorem, we have
$\log{M}=(1+o(1))P$.
If $\Delta$ is a square, then
for any prime $p\mid M$ we have
\begin{equation}\label{eq:legendre}
\left(\frac{\Delta}{p}\right)
=\left(\frac{3d}{p}\right)
\left(\frac{|z|^3-c}{p}\right)\in\{0,1\},
\end{equation}
where $c\equiv\epsilon\left(\frac{d}{3}\right)k+\frac{d^3}{4}\pmod{M}$.
When $\lcm(d,18)\le\alpha B/M$, we first compute this
function for every residue class $|z|\pmod{M}$, and select only those
residues for which \eqref{eq:legendre} holds
for every $p\mid M$.
By Hasse's bound, the number of permissible residues is at
most
$$
\frac{M}{2^{\omega(M/(M,d))}}
\prod_{p\mid\frac{M}{(M,d)}}\left(1+O\!\left(\frac1{\sqrt{p}}\right)\right)
=\frac{M}{2^{\omega(M/(M,d))}}e^{O(\sqrt{P}/\log{P})},
$$
and thus the total number of $z$ values to consider is at most
\begin{align*}
\sum_{\lcm(d,18)\le\frac{\alpha B}{M}}&r_d(k)\left[M+
\frac{e^{O(\sqrt{P}/\log{P})}}{2^{\omega(M/(M,d))}}
\frac{\alpha B}{d}\right]
+\sum_{\substack{d\le\alpha B\\\lcm(d,18)>\frac{\alpha B}{M}}}
\frac{r_d(k)\alpha B}{d}\\
&\ll_k B\log{M}+
\frac{e^{O(\sqrt{P}/\log{P})}}{2^{\omega(M)}}
\sum_{g\mid M}\frac{2^{\omega(g)}r_g(k)}{g}
\sum_{d'\le\frac{\alpha B}{9gM}}\frac{r_{d'}(k)\alpha B}{d'}\\
&\ll_k B\log{M}+B\log{B}
\frac{e^{O(\sqrt{P}/\log{P})}}{2^{\omega(M)}}
\prod_{p\mid M}\left(1+\frac{2r_p(k)}{p}\right)\\
&\ll BP+\frac{B\log{B}}{2^{(1+o(1))P/\log{P}}}
\ll B(\log\log{B})(\log\log\log{B}).
\end{align*}

For the $z$ that are not eliminated in this way, we follow a similar
strategy with a few other auxiliary moduli $M'$ composed of larger
primes, in order to accelerate the square testing.  We precompute tables
of cubes modulo $M'$ and Legendre symbols modulo $p\mid M'$, so that
testing \eqref{eq:legendre} is reduced to table lookups.
Only when all of these tests pass do we compute $\Delta$ in
multi-precision arithmetic \cite{GMP} and apply a general square test, and this
happens for a vanishingly small proportion of candidate values.
In fact we expect the number of Legendre tests to be bounded on average,
so in total, finding all solutions
with $|z|\le B$ should require no more than
$O_k\bigl(B(\log\log{B})(\log\log\log{B})\bigr)$
table lookups and arithmetic operations on integers in $[0,B]$.

Thus, when $B$ fits within the machine word size, we expect the running
time to be nearly linear, and this is what we observe in practice for
$B<2^{64}$.

\section{Implementation}
We implemented the above algorithm in \texttt{C}, with a few inline assembly
routines for Montgomery arithmetic \cite{Montgomery1} written by Ben
Buhrow \cite{yafu}, and Kim Walisch's \texttt{primesieve} library
\cite{primesieve} for enumerating prime numbers.

The algorithm is naturally split between values of $d$ with a prime
factor exceeding $\sqrt{\alpha B}$ and those that are
$\sqrt{\alpha B}$-smooth. The former set of $d$ consumes more than
two-thirds of the running time, but is more easily parallelized.
We ran this part on the massively parallel cluster Bluecrystal Phase 3
at the Advanced Computing Research Centre, University of Bristol.
For the smooth $d$ we used a separate small cluster of 32- and 64-core
nodes.

We searched for solutions to \eqref{eq:main} for $k\in\{33,42\}$
and $\min\{|x|,|y|,|z|\}\le10^{16}$, and found the following:
$$
33=8\,866\,128\,975\,287\,528^3+(-8\,778\,405\,442\,862\,239)^3
+(-2\,736\,111\,468\,807\,040)^3.
$$
We also searched for solutions for $k=3$, addressing a question of
Mordell \cite[\S6]{Mordell}. In this case, Cassels \cite{Cassels} observed
that cubic reciprocity forces the additional constraint $x\equiv y\equiv
z\pmod{9}$, and it follows that part (i) of the lemma can be upgraded to a
congruence modulo $162$:
$$
z\equiv 4\left(\frac{d}{3}\right)d+3\bigl(d^2-1\bigr)\pmod{162}.
$$
Despite this added efficiency, we found no solutions,
beyond the known single-digit solutions,
with $\min\{|x|,|y|,|z|\}\le 10^{16}$.

The total computation used approximately 23 core-years over
one month of real time.
\bibliographystyle{amsalpha}
\bibliography{cubes}

\providecommand{\bysame}{\leavevmode\hbox to3em{\hrulefill}\thinspace}
\providecommand{\MR}{\relax\ifhmode\unskip\space\fi MR }
% \MRhref is called by the amsart/book/proc definition of \MR.
\providecommand{\MRhref}[2]{%
  \href{http://www.ams.org/mathscinet-getitem?mr=#1}{#2}
}
\providecommand{\href}[2]{#2}
\begin{thebibliography}{BPTYJ07}

\bibitem[BCFS18]{magma}
Wieb Bosma, John Cannon, Claus Fieker, and Allan Steel, \emph{Handbook of
  {M}agma functions}, Sydney, 2.24 ed., 2018.

\bibitem[BH15]{Numberphile}
Tim Browning and Brady Haran, \emph{The uncracked problem with 33}, 2015,
  \url{https://youtu.be/wymmCdLdPvM}.

\bibitem[BPTYJ07]{BPTY}
Michael Beck, Eric Pine, Wayne Tarrant, and Kim Yarbrough~Jensen, \emph{New
  integer representations as the sum of three cubes}, Math. Comp. \textbf{76}
  (2007), no.~259, 1683--1690. \MR{2299795}

\bibitem[Bre95]{Bremner}
Andrew Bremner, \emph{On sums of three cubes}, Number theory ({H}alifax, {NS},
  1994), CMS Conf. Proc., vol.~15, Amer. Math. Soc., Providence, RI, 1995,
  pp.~87--91. \MR{1353923}

\bibitem[Buh19]{yafu}
Ben Buhrow, \emph{{YAFU}}, 2019, \url{https://sourceforge.net/projects/yafu/}.

\bibitem[Cas85]{Cassels}
J.~W.~S. Cassels, \emph{A note on the {D}iophantine equation
  {$x^3+y^3+z^3=3$}}, Math. Comp. \textbf{44} (1985), no.~169, 265--266.
  \MR{771049}

\bibitem[EJ09]{EJ}
Andreas-Stephan Elsenhans and J\"{o}rg Jahnel, \emph{New sums of three cubes},
  Math. Comp. \textbf{78} (2009), no.~266, 1227--1230. \MR{2476583}

\bibitem[Elk00]{Elkies}
Noam~D. Elkies, \emph{Rational points near curves and small nonzero
  {$|x^3-y^2|$} via lattice reduction}, Algorithmic number theory ({L}eiden,
  2000), Lecture Notes in Comput. Sci., vol. 1838, Springer, Berlin, 2000,
  pp.~33--63. \MR{1850598}

\bibitem[Gt16]{GMP}
Torbj\"orn Granlund and {the GMP development team}, \emph{{GNU MP}: {T}he {GNU}
  {M}ultiple {P}recision {A}rithmetic {L}ibrary}, 6.1.2 ed., 2016,
  \url{https://gmplib.org/}.

\bibitem[HB92]{Heath-Brown}
D.~R. Heath-Brown, \emph{The density of zeros of forms for which weak
  approximation fails}, Math. Comp. \textbf{59} (1992), no.~200, 613--623.
  \MR{1146835}

\bibitem[HBLtR93]{HLT}
D.~R. Heath-Brown, W.~M. Lioen, and H.~J.~J. te~Riele, \emph{On solving the
  {D}iophantine equation {$x^3+y^3+z^3=k$} on a vector computer}, Math. Comp.
  \textbf{61} (1993), no.~203, 235--244. \MR{1202610}

\bibitem[Hui16]{Huisman}
Sander~G. Huisman, \emph{Newer sums of three cubes},
  \href{https://arxiv.org/abs/1604.07746}{arXiv:1604.07746}, 2016.

\bibitem[KTS97]{KTS}
Kenji Koyama, Yukio Tsuruoka, and Hiroshi Sekigawa, \emph{On searching for
  solutions of the {D}iophantine equation {$x^3+y^3+z^3=n$}}, Math. Comp.
  \textbf{66} (1997), no.~218, 841--851. \MR{1401942}

\bibitem[Mon85]{Montgomery1}
Peter~L. Montgomery, \emph{Modular multiplication without trial division},
  Math. Comp. \textbf{44} (1985), no.~170, 519--521. \MR{777282}

\bibitem[Mon87]{Montgomery2}
\bysame, \emph{Speeding the {P}ollard and elliptic curve methods of
  factorization}, Math. Comp. \textbf{48} (1987), no.~177, 243--264.
  \MR{866113}

\bibitem[Mor53]{Mordell}
L.~J. Mordell, \emph{On the integer solutions of the equation
  {$x^2+y^2+z^2+2xyz=n$}}, J. London Math. Soc. \textbf{28} (1953), 500--510.
  \MR{0056619}

\bibitem[MW55]{MW}
J.~C.~P. Miller and M.~F.~C. Woollett, \emph{Solutions of the {D}iophantine
  equation {$x^3+y^3+z^3=k$}}, J. London Math. Soc. \textbf{30} (1955),
  101--110. \MR{0067916}

\bibitem[NZM91]{NZM}
Ivan Niven, Herbert~S. Zuckerman, and Hugh~L. Montgomery, \emph{An introduction
  to the theory of numbers}, fifth ed., John Wiley \& Sons, Inc., New York,
  1991. \MR{1083765}

\bibitem[{The}18]{pari}
{The PARI~Group}, Univ. Bordeaux, \emph{{PARI/GP version {\tt 2.11.0}}}, 2018,
  available from \url{http://pari.math.u-bordeaux.fr/}.

\bibitem[Wal19]{primesieve}
Kim Walisch, \emph{primesieve}, 2019, \url{https://primesieve.org}.

\end{thebibliography}
\end{document}